\def\ccg#1{\textcolor{cyan}{#1}}
\begin{document}

\newtheorem{thm}{Theorem}
\newtheorem{lem}[thm]{Lemma}
\newtheorem{claim}[thm]{Claim}
\newtheorem{cor}[thm]{Corollary}
\newtheorem{prop}[thm]{Proposition}
\newtheorem{definition}[thm]{Definition}
\newtheorem{rem}[thm]{Remark}
\newtheorem{question}[thm]{Open Question}
\newtheorem{conj}[thm]{Conjecture}
\newtheorem{prob}{Problem}

\newtheorem{lemma}[thm]{Lemma}

\newcommand{\GL}{\operatorname{GL}}
\newcommand{\SL}{\operatorname{SL}}
\newcommand{\lcm}{\operatorname{lcm}}
\newcommand{\ord}{\operatorname{ord}}
\newcommand{\Op}{\operatorname{Op}}
\newcommand{\Tr}{\operatorname{Tr}}
\newcommand{\Nm}{\operatorname{Nm}}

\numberwithin{equation}{section}
\numberwithin{thm}{section}
\numberwithin{table}{section}

\def\vol {{\mathrm{vol\,}}}
\def\squareforqed{\hbox{\rlap{$\sqcap$}$\sqcup$}}
\def\qed{\ifmmode\squareforqed\else{\unskip\nobreak\hfil
\penalty50\hskip1em\null\nobreak\hfil\squareforqed
\parfillskip=0pt\finalhyphendemerits=0\endgraf}\fi}

\def \balpha{\bm{\alpha}}
\def \bbeta{\bm{\beta}}
\def \bgamma{\bm{\gamma}}
\def \blambda{\bm{\lambda}}
\def \bchi{\bm{\chi}}
\def \bphi{\bm{\varphi}}
\def \bpsi{\bm{\psi}}
\def \bomega{\bm{\omega}}
\def \btheta{\bm{\vartheta}}

\newcommand{\bfxi}{{\boldsymbol{\xi}}}
\newcommand{\bfrho}{{\boldsymbol{\rho}}}

\def\Kab{\sfK_\psi(a,b)}
\def\Kuv{\sfK_\psi(u,v)}
\def\SaUV{\cS_\psi(\balpha;\cU,\cV)}
\def\SaAV{\cS_\psi(\balpha;\cA,\cV)}

\def\SUV{\cS_\psi(\cU,\cV)}
\def\SAB{\cS_\psi(\cA,\cB)}

\def\Kmnp{\sfK_p(m,n)}

\def\KKap{\cH_p(a)}
\def\KKaq{\cH_q(a)}
\def\KKmnp{\cH_p(m,n)}
\def\KKmnq{\cH_q(m,n)}

\def\Klmnp{\sfK_p(\ell, m,n)}
\def\Klmnq{\sfK_q(\ell, m,n)}

\def \SALMNq {\cS_q(\balpha;\cL,\cI,\cJ)}
\def \SALMNp {\cS_p(\balpha;\cL,\cI,\cJ)}

\def \SACXMQX {\fS(\balpha,\bzeta, \bxi; M,Q,X)}

\def\SAMJp{\cS_p(\balpha;\cM,\cJ)}
\def\SAMJq{\cS_q(\balpha;\cM,\cJ)}
\def\SAqMJq{\cS_q(\balpha_q;\cM,\cJ)}
\def\SAJq{\cS_q(\balpha;\cJ)}
\def\SAqJq{\cS_q(\balpha_q;\cJ)}
\def\SAIJp{\cS_p(\balpha;\cI,\cJ)}
\def\SAIJq{\cS_q(\balpha;\cI,\cJ)}

\def\RIJp{\cR_p(\cI,\cJ)}
\def\RIJq{\cR_q(\cI,\cJ)}

\def\TWXJp{\cT_p(\bomega;\cX,\cJ)}
\def\TWXJq{\cT_q(\bomega;\cX,\cJ)}
\def\TWpXJp{\cT_p(\bomega_p;\cX,\cJ)}
\def\TWqXJq{\cT_q(\bomega_q;\cX,\cJ)}
\def\TWJq{\cT_q(\bomega;\cJ)}
\def\TWqJq{\cT_q(\bomega_q;\cJ)}

 \def \xbar{\overline x}
  \def \ybar{\overline y}

\def\cA{{\mathcal A}}
\def\cB{{\mathcal B}}
\def\cC{{\mathcal C}}
\def\cD{{\mathcal D}}
\def\cE{{\mathcal E}}
\def\cF{{\mathcal F}}
\def\cG{{\mathcal G}}
\def\cH{{\mathcal H}}
\def\cI{{\mathcal I}}
\def\cJ{{\mathcal J}}
\def\cK{{\mathcal K}}
\def\cL{{\mathcal L}}
\def\cM{{\mathcal M}}
\def\cN{{\mathcal N}}
\def\cO{{\mathcal O}}
\def\cP{{\mathcal P}}
\def\cQ{{\mathcal Q}}
\def\cR{{\mathcal R}}
\def\cS{{\mathcal S}}
\def\cT{{\mathcal T}}
\def\cU{{\mathcal U}}
\def\cV{{\mathcal V}}
\def\cW{{\mathcal W}}
\def\cX{{\mathcal X}}
\def\cY{{\mathcal Y}}
\def\cZ{{\mathcal Z}}
\def\Ker{{\mathrm{Ker}}}

\def\NmQR{N(m;Q,R)}
\def\VmQR{\cV(m;Q,R)}

\def\Xm{\cX_m}

\def \A {{\mathbb A}}
\def \B {{\mathbb A}}
\def \C {{\mathbb C}}
\def \F {{\mathbb F}}
\def \G {{\mathbb G}}
\def \L {{\mathbb L}}
\def \K {{\mathbb K}}
\def \N {{\mathbb N}}
\def \PP {{\mathbb P}}
\def \Q {{\mathbb Q}}
\def \R {{\mathbb R}}
\def \Z {{\mathbb Z}}
\def \fS{\mathfrak S}

\def\e{{\mathbf{\,e}}}
\def\ep{{\mathbf{\,e}}_p}
\def\eq{{\mathbf{\,e}}_q}

\def\\{\cr}
\def\({\left(}
\def\){\right)}
\def\fl#1{\left\lfloor#1\right\rfloor}
\def\rf#1{\left\lceil#1\right\rceil}

\def\Tr{{\mathrm{Tr}}}
\def\Nm{{\mathrm{Nm}}}
\def\Im{{\mathrm{Im}}}

\def \oF {\overline \F}

\newcommand{\pfrac}[2]{{\left(\frac{#1}{#2}\right)}}

\def \Prob{{\mathrm {}}}
\def\e{\mathbf{e}}
\def\ep{{\mathbf{\,e}}_p}
\def\epp{{\mathbf{\,e}}_{p^2}}
\def\em{{\mathbf{\,e}}_m}

\def\Res{\mathrm{Res}}
\def\Orb{\mathrm{Orb}}

\def\vec#1{\mathbf{#1}}
\def \va{\vec{a}}
\def \vb{\vec{b}}
\def \vn{\vec{n}}
\def \vu{\vec{u}}
\def \vv{\vec{v}}
\def \vz{\vec{z}}
\def\flp#1{{\left\langle#1\right\rangle}_p}
\def\sM {\mathsf {M}}

\def\md{{\sf{m.d.}}}

\def\sfG {\mathsf {G}}
\def\sfK {\mathsf {K}}

\def\rad{\mathrm{rad}}
 \newcommand{\Fp}{\mathbb F_p}
 \newcommand{\Gal}{\operatorname{Gal}}

\def\mand{\qquad\mbox{and}\qquad}


\title[Multiplicative dependence in recurrence sequences]{Multiplicative dependence in linear recurrence sequences}

\author[A. B\' erczes] {Attila B\' erczes}
\address{Institute of Mathematics, University of Debrecen, P. O. Box 400, H-4002 Debrecen, Hungary}
\email{berczesa@science.unideb.hu}

\author[L.  Hajdu] {Lajos Hajdu}
\address{Count Istv{\'a}n Tisza Foundation, Institute of Mathematics, University of Debrecen, 
  P. O. Box 400, H-4002 Debrecen, Hungary, and 
HUN-REN-DE Equations, Functions, Curves
and their Applications Research Group}
\email{hajdul@science.unideb.hu}

\author[A. Ostafe] {Alina Ostafe}
\address{School of Mathematics and Statistics, University of New South Wales, Sydney NSW 2052, Australia}
\email{alina.ostafe@unsw.edu.au}

\author[I. E. Shparlinski] {Igor E. Shparlinski}
\address{School of Mathematics and Statistics, University of New South Wales, Sydney NSW 2052, Australia}
\email{igor.shparlinski@unsw.edu.au}

\begin{abstract}  For a wide class of integer linear recurrence sequences $\(u(n)\)_{n=1}^\infty$,
we give an upper bound on the number of $s$-tuples $\(n_1, \ldots, n_s\) \in \(\Z\cap [M+1,M+ N]\)^s$ such that the corresponding elements
$u(n_1), \ldots, u(n_s)$ in the sequence are multiplicatively dependent.
\end{abstract}

\subjclass[2020]{11B37, 11D61}

\keywords{Multiplicative dependence, linear recurrence sequences}

\maketitle
\tableofcontents

\section{Introduction}

\subsection{Motivation and set-up}
Let  $\vu = \(u(n)\)_{n=1}^\infty$ be an integer linear recurrence sequence of order $d \ge 1$, that is, a sequence of integers satisfying a relation of the form
\[
u(n+d) = c_{d-1} u(n+d-1) +\ldots +  c_{0} u(n) , \qquad n =1,2, \ldots,
\]
and not satisfying any shorter relation. In this case
\[
f(X) = X^d -  c_{d-1} X^{n+d-1} -\ldots -  c_{0} \in \Z[X]
\]
is called the characteristic polynomial of $\vu$.

Recently there have been several works~\cite{BDKL, BeRa1, BeRa2, BeZi, LuZi, GGL, G_RL0, G_RL}  investigating multiplicative relations of the form
\begin{equation}
\label{eq:MultRel}
u(n_1)^{k_1} \ldots  u(n_s)^{k_s} = 1.
\end{equation}
However, these papers consider certain special cases. The works~\cite{BeZi, G_RL, LuZi} are limited to the case of binary (that is, of order $d =2$) linear recurrence sequences and also assume that the exponents $k_1, \ldots, k_s$ are {\it fixed\/} non-zero integers, while the papers~\cite{BDKL, BeRa1, GGL, G_RL0} concern specific sequences. Under these restrictions, the mentioned papers contain several finiteness results. Finally, the recent work~\cite{BeRa2} concerns linear recurrence sequences of arbitrary order -- however, under a rather restrictive condition on the coefficients $c_i$ defining the generating relation.

Here we are interested in the case of general sequences of arbitrary order $d \ge 2$ and also we do not fix the
exponents $k_1, \ldots, k_s$. Thus, we study $s$-tuples $\(u(n_1), \ldots, u(n_s)\)$,
which are {\it multiplicatively dependent} (\md\/), where, as usual, we say that the nonzero complex numbers $\gamma_1,\ldots,\gamma_s$ are \md\ if there exist integers $k_1,\ldots,k_s$, not all zero, such that
\[
\gamma_1^{k_1}\ldots \gamma_s^{k_s}=1.
\]
However, instead of finiteness results, we give an upper bound on the
density of such $s$-tuples.

More precisely, for $M\ge 0$ and $N\ge 1$, we are interested in the following quantity
\begin{align*}
\sM_s(M,N)=\sharp\{(n_1&,\ldots,n_s)\in\(\Z\cap [M+1,M+N]\)^s:\\
&\qquad\qquad\qquad\qquad ~u(n_1),\ldots,u(n_s)\ \text{are \md}\}.
\end{align*}
To estimate $\sM_s(M,N)$ we also study
\begin{align*}
\sM_s^*(M,N)=\sharp\{(n_1&,\ldots,n_s)\in\(\Z\cap [M+1,M+N]\)^s: \\
& ~u(n_1),\ldots,u(n_s)\ \text{are \md\ of maximal rank}\},
\end{align*}
where
the maximality of the rank for \md\ of $(u(n_1),\ldots,u(n_s))$ means that no sub-tuple is \md\ In particular, this implies that if one has a \md~\eqref{eq:MultRel} of maximal rank, then $k_1\cdots k_s \ne 0$.

We can then estimate $\sM_s(M,N)$ via the inequality
\begin{equation}
\label{eq:M and M*}
\sM_s(M,N) \le  \sum_{t=1}^s \binom{s}{t} \sM_t^*(M,N) N^{s-t} .
\end{equation}

\subsection{Notation}
We recall that  the notations $U = O(V)$, $U \ll V$ and $ V\gg U$
are equivalent to $|U|\leqslant c V$ for some positive constant $c$,
which throughout this work, may depend only on the integer parameter $s$ and the sequence $\vu$.

It is convenient to denote by  $\log_{k} x$ the $k$-fold iterated logarithm,
that is,  for $x\ge 1$ we set
\[
\log_1 x =  \log x \mand  \log_k = \log_{k-1} \max\{\log x, 2\}, \quad k =2, 3, \ldots.
\]

\subsection{Main results}

We say that the sequence $\vu$ is {\it non-degenerate\/} if there are no roots of unity among   the ratios
of distinct roots of $f$. We  say that the sequence $\vu$ has a dominant root, if its characteristic polynomial $f$
has a root $\lambda$ with
\[
|\lambda| > \max\{|\mu|:~ f(\mu) = 0, \ \mu \ne \lambda\}.
\]
Furthermore, we say that  $\vu$ is {\it simple\/} if  $f$ has no multiple roots.


\begin{thm}
\label{thm:Ms-star} Let $\vu$ be a simple non-degenerate sequence of order $d \ge 2$.
For any fixed $s \ge 1$, uniformly over $M\ge 0$, we have
\[
\sM_s^*(M,N)\le N^{s\(1-1/(4d-3)\)+o(1)}.
\]
\end{thm}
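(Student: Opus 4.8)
The plan is to exploit the dominant-root structure of a simple non-degenerate recurrence to turn a multiplicative relation $u(n_1)^{k_1}\cdots u(n_s)^{k_s}=1$ of maximal rank into a diophantine constraint that is sparse in the box $[M+1,M+N]^s$. First I would recall the classical asymptotic expansion: writing the distinct roots of $f$ as $\lambda_1,\dots,\lambda_r$ with $|\lambda_1|\ge|\lambda_2|\ge\cdots$, simplicity gives $u(n)=\sum_{i=1}^r a_i\lambda_i^n$ with all $a_i\neq 0$, and non-degeneracy guarantees that no ratio $\lambda_i/\lambda_j$ is a root of unity. The case that drives the bound is when $\vu$ actually has a dominant root $\lambda=\lambda_1$: then $\log|u(n)| = n\log|\lambda| + O(\theta^n)$ for some $\theta<1$ (in fact $\log|u(n)|=n\log|\lambda|+\log|a_1|+o(1)$), so a relation \eqref{eq:MultRel} forces
\[
\sum_{j=1}^s k_j\(n_j\log|\lambda| + \log|a_1|\) = o(1),
\]
i.e. $\(\sum_j k_j n_j\)\log|\lambda| + \(\sum_j k_j\)\log|a_1|$ is extremely small. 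Since the $k_j$ are integers (of size we can bound by a standard height/linear-forms-in-logarithms argument, or by the gcd-reduction built into "maximal rank"), this is a near-equality between two essentially fixed real numbers scaled by integers, which pins $\sum_j k_j n_j$ to a bounded set once $\sum_j k_j$ is fixed.

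Next I would make the counting precise. Fix the tuple of exponents $(k_1,\dots,k_s)$ with $k_1\cdots k_s\neq 0$; the maximal-rank hypothesis together with a height bound for multiplicative relations among the $u(n_j)$ (these have height $\asymp n_j$, so Masser-type bounds give $|k_j|\ll \max_i n_i \ll N$, and in fact one can take the relation primitive) restricts $(k_1,\dots,k_s)$ to a set of size $N^{O(1)}$ — but crucially, for each fixed exponent vector the set of admissible $(n_1,\dots,n_s)$ lies on $O(1)$ affine hyperplanes $\sum_j k_j n_j = c$, hence has at most $O(N^{s-1})$ points. That by itself only gives $N^{s-1+o(1)}$, which is weaker than claimed, so the real work is a more careful argument: one should not fix the full exponent vector but rather run an induction on $s$ (peeling off one index, using \eqref{eq:M and M*}-style bookkeeping) and, for the base of the induction, use that the relation among $u(n_1),\dots,u(n_s)$ of maximal rank forces the *ratios* $u(n_i)/u(n_j)$ or the values themselves to be $S$-units in a controlled number field, then invoke a quantitative result on the number of solutions of unit equations / the subspace theorem. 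The exponent $1-1/(4d-3)$ strongly suggests that the governing input is a bound of the shape: the number of $n\le N$ for which $u(n)$ is "multiplicatively structured" relative to a fixed finite set of primes is $N^{1-1/(4d-3)+o(1)}$, coming from estimating how often the non-dominant part $\sum_{i\ge2}a_i\lambda_i^n$ can conspire with the dominant term — a square-sieve or large-sieve argument on the near-integer $u(n)/\lambda^n$, where the exponent $2d$ (hence $4d-3$ after the usual loss) enters through the degree of the splitting field and the number of conjugates one must control simultaneously.

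The key steps, in order: (i) write down the root expansion and the dominant-root asymptotics, reducing to the case of a dominant root by splitting off the sub-sums with equal-modulus roots and handling them by non-degeneracy; (ii) bound the exponents $k_j$ and reduce to primitive relations via the maximal-rank hypothesis; (iii) translate \eqref{eq:MultRel} into the near-linear relation among $n_j\log|\lambda|$ above, and more importantly into a congruence/approximation condition showing $u(n_j)$ must be supported (up to bounded factors) on a fixed finite set of primes depending only on $\vu$ and on the smaller indices; (iv) count such $n_j$ using a sieve bound of strength $N^{1-1/(4d-3)+o(1)}$; (v) assemble via induction on $s$ and the binomial bookkeeping of \eqref{eq:M and M*} to get the clean exponent $s(1-1/(4d-3))$. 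The main obstacle — and the technical heart — will be step (iv): obtaining the exponent $1-1/(4d-3)$ for a single coordinate, i.e. showing that $u(n)$ is $\mathcal P$-smooth-ish (divisible only by a fixed prime set, up to bounded cofactor) for at most $N^{1-1/(4d-3)+o(1)}$ values of $n\le N$. This needs a genuinely arithmetic input about values of recurrence sequences — likely a combination of the Skolem–Mahler–Lech structure, bounds on the number of zeros of $u(n)\bmod p$ (at most $d$ per period, controlling which primes can divide $u(n)$), and a large-sieve or square-sieve estimate, with $2d$ and the extra $-3$ arising from optimizing the sieve level against the period lengths. I would expect the uniformity in $M$ to come for free since the asymptotics and the $p$-adic valuations of $u(n)$ depend on $n\bmod(\text{period})$ and on $n$ only through these controlled quantities, never on the starting point of the window.
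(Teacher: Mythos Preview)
Your proposal has a genuine gap: the entire bound rests on your step~(iv), which you yourself flag as ``the main obstacle'' and leave unproven. You speculate that a square-sieve or large-sieve on $u(n)/\lambda^n$ should give that $u(n)$ is supported on a fixed prime set for at most $N^{1-1/(4d-3)+o(1)}$ values of $n$, but you give no mechanism for this and no reason why the exponent $4d-3$ should appear. Without step~(iv) the rest of the outline collapses: your hyperplane count only gives $N^{s-1+o(1)}$ (as you note), and the bookkeeping via~\eqref{eq:M and M*} goes the wrong direction --- that inequality bounds $\sM_s$ in terms of $\sM_t^*$, not $\sM_s^*$ in terms of smaller $\sM_t^*$, so it cannot drive an induction on~$s$. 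There is also a hypothesis mismatch: Theorem~\ref{thm:Ms-star} does \emph{not} assume a dominant root (that assumption enters only in Theorems~\ref{thm:Ms-star M large} and~\ref{thm:M2-star}), so your entire reduction via the asymptotic $\log|u(n)|=n\log|\lambda|+o(1)$ is unavailable, and ``splitting off sub-sums with equal-modulus roots'' does not repair this.

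The paper's argument is entirely different and uses none of the dominant-root analysis. One introduces a parameter $R$ and the set $\cW(R)$ of primes $p$ for which the minimal multiplicative order $\varrho_p$ of a ratio $\lambda_i/\lambda_j$ modulo $p$ is at most~$R$; by an elementary norm argument $\sharp\cW(R)\ll R^2/\log R$. The $s$ coordinates split into $r$ indices where $u(n_i)$ is $\cW(R)$-smooth (counted by an $S$-unit bound for recurrence values, contributing essentially $R^{2r}$) and $t=s-r$ indices where $u(n_i)$ has a prime factor $p\notin\cW(R)$. On these $t$ indices one builds a graph, joining $i$ and $j$ when $\gcd(u(n_i),u(n_j))$ has a prime outside $\cW(R)$; maximal rank forces no isolated vertices, so a vertex cover of size $m\le t/2$ exists. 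The $m$ ``cover'' coordinates are chosen freely ($N^m$ choices), and each of the remaining $t-m$ coordinates satisfies $u(n_j)\equiv 0\pmod p$ for some $p$ with $\varrho_p>R$, which by a zero-counting lemma has $\ll N(N^{-1}+R^{-1})^{1/(d-1)}$ solutions. Multiplying and taking $R=N^{2(d-1)/(4d-3)}$ balances the terms and produces the exponent $s(1-1/(4d-3))$. The number $4d-3$ thus comes from optimising $R^{2r}\cdot N^{t}\cdot R^{-t/(2(d-1))}$ over $r+t=s$, not from any sieve on a single coordinate.
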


Analysing the proof of Theorem~\ref{thm:Ms-star}, one can see that for $M=0$ we can drop $o(1)$ in the  bound.

\begin{rem}  Considering $s$-tuples with $n_1=n_2$ we see  that
\begin{equation}
\label{eq:low Ms}
\sM_s(M,N) \ge N^{s-1}.
\end{equation}
Therefore, it is impossible to derive a bound of the same type as in Theorem~\ref{thm:Ms-star}  for $\sM_s(M,N)$.
\end{rem}

When $M$ is (exponentially) large compared to $N$, we get the following bound, which improves Theorem~\ref{thm:Ms-star}  for $s<4d-3$.

\begin{thm}
\label{thm:Ms-star M large} Let $\vu$ be a simple non-degenerate sequence of order $d \ge 2$ with a dominant root and let
\[M\ge \exp(N \log_3N/\log_2N).
\]
Then, for any fixed $s \ge 1$, uniformly over $M$, we have
\[
\sM_s^*(M,N)\le N^{s-1+o(1)}.
\]
\end{thm}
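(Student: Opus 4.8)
The plan is to exploit the dominant root to reduce a maximal-rank multiplicative relation among $u(n_1),\dots,u(n_s)$ to an $S$-unit-type equation over the splitting field of $f$, and then to show that, once $M$ is exponentially large in $N$, at most one of the $s$ coordinates can range freely while the others are essentially determined. First I would fix the dominant root $\lambda$ and write the standard asymptotic expansion $u(n) = c\,\lambda^n\(1 + O(\theta^n)\)$ with $\theta = |\mu/\lambda| < 1$ for the second-largest root $\mu$, valid for $n$ large (and for $M \ge \exp(N\log_3 N/\log_2 N)$ every index in $[M+1,M+N]$ is certainly large enough). Thus, up to a multiplicative error of size $\exp(O(\theta^M))$, the quantity $u(n_1)^{k_1}\cdots u(n_s)^{k_s}$ behaves like $c^{k_1+\dots+k_s}\lambda^{k_1 n_1 + \dots + k_s n_s}$, so a relation $\eqref{eq:MultRel}$ forces $|k_1 n_1 + \dots + k_s n_s|$ to be extremely small unless $|\lambda| = 1$, which is excluded by non-degeneracy together with $d \ge 2$ (a dominant root of absolute value $1$ would be the only root on its circle, but then all ratios to other roots have absolute value $>1$, fine — so more carefully one uses that the product of all roots is a nonzero rational integer, forcing $|\lambda| > 1$). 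Hence $k_1 n_1 + \dots + k_s n_s = 0$ exactly, after which $\prod u(n_i)^{k_i} = 1$ becomes a relation purely among the bounded correction factors $1 + O(\theta^{n_i})$.

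Next I would bound the exponents: by the height machinery behind Theorem~\ref{thm:Ms-star} (or directly, since $u(n) = \lambda^{n + o(n)}$ gives $\log|u(n)| \asymp n$), a maximal-rank relation forces $\max_i |k_i| \le N^{O(1)}$, in fact $|k_i| \ll N$ up to logarithmic factors. Now the linear constraint $k_1 n_1 + \dots + k_s n_s = 0$ with $k_1\cdots k_s \ne 0$ and all $n_i \in [M+1,M+N]$ is very restrictive when $M$ is huge: writing $n_i = M + m_i$ with $1 \le m_i \le N$, it reads $(k_1 + \dots + k_s) M = -(k_1 m_1 + \dots + k_s m_s)$, and the right side is $O(N \cdot \max|k_i|) = N^{2+o(1)}$, far smaller than $M$; so we must have $k_1 + \dots + k_s = 0$ and then also $k_1 m_1 + \dots + k_s m_s = 0$. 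So the count is reduced to $s$-tuples $(m_1,\dots,m_s) \in [1,N]^s$ admitting a nonzero integer vector $(k_1,\dots,k_s)$ with $\sum k_i = 0$ and $\sum k_i m_i = 0$ such that, in addition, the residual identity among the correction factors holds. For the residual identity I would argue that $\prod_i (1 + O(\theta^{M+m_i}))^{k_i} = 1$ with $\sum|k_i|\theta^M \ll N^{1+o(1)}\theta^M \to 0$ forces, by taking logarithms, an honest linear relation $\sum_i k_i \log u(n_i) = 0$ in $\R$ (no $2\pi i$ ambiguity survives since everything is real and close to $1$), which combined with $\sum k_i(n_i - M)\log|\lambda| = 0$... — in fact the cleanest route is: a genuine multiplicative relation $\prod u(n_i)^{k_i} = 1$ in $\Z$ together with $\sum k_i n_i = 0$ means $\prod_i \(u(n_i)\lambda^{-n_i}\)^{k_i} = 1$, i.e. a multiplicative relation among the $s$ algebraic numbers $v(n_i) := u(n_i)\lambda^{-n_i}$ lying in the splitting field, each within $\theta^M$ of the nonzero constant $c$; standard results on multiplicative relations among near-equal algebraic numbers (or just the observation that $v(n_i)/v(n_j)$ is then forced to be $1$, as it is within $2\theta^M$ of $1$ and is an algebraic number of bounded degree and height, hence cannot be a nontrivial root lying that close to $1$) pin down the $n_i$'s severely.

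The hard part, and the place I expect to spend the real effort, is the combinatorial counting in the last step: bounding the number of $(m_1,\dots,m_s) \in [1,N]^s$ for which a nonzero $(k_1,\dots,k_s)$ with $\sum k_i = 0 = \sum k_i m_i$ exists, and for which the algebraic correction terms also coincide, by $N^{s-1+o(1)}$. The linear-algebra part alone gives the right shape — fixing $s-2$ of the $m_i$ and two of the $k_i$, the remaining data lies in a bounded-dimensional lattice, yielding at most $N^{s-2}$ choices times a divisor-type factor $N^{o(1)}$ for each — but one must carefully handle the degenerate sub-cases where many $k_i$ vanish (here the maximal-rank hypothesis is essential: it guarantees $k_1\cdots k_s \ne 0$, so no coordinate drops out) and where the $m_i$ are not distinct (which only helps, contributing lower-order terms). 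I would organise the count by first summing over the possible exponent vectors $(k_1,\dots,k_s)$ — there are $N^{s+o(1)}$ of them — and for each, the solution set $\{\sum k_i m_i = 0\} \cap [1,N]^s$ is an affine sublattice slice of size $\ll N^{s-1}/\gcd$-type, but that naive bound loses a factor $N$; the fix is to not sum over $k$ freely but to use that $(k_1,\dots,k_s)$ is itself constrained to be (up to scaling) the kernel vector of the $2\times s$ matrix with rows $(1,\dots,1)$ and $(m_1,\dots,m_s)$, so the pair (tuple of $m$'s, primitive $k$) is determined by $s-1$ free parameters among the $m$'s plus the $\gcd$. Pushing this bookkeeping through to the clean bound $N^{s-1+o(1)}$, uniformly in $M$, is the crux; everything upstream is a routine application of the dominant-root expansion and the height bounds already available from the proof of Theorem~\ref{thm:Ms-star}.
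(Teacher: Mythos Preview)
Your route is genuinely different from the paper's --- you try to exploit the dominant-root asymptotic $u(n)=c\lambda^{n}(1+O(\theta^{n}))$ analytically, whereas the paper's proof is purely arithmetic: it splits the relation as $\prod_{i\in\cI}u(n_i)^{k_i}=\prod_{j\in\cJ}u(n_j)^{k_j}$, fixes the $\cI$-side in $O(N^{|\cI|})$ ways, invokes Stewart's bound (Lemma~\ref{lem:sqrfree}) to get a large divisor $d\gg M^{c\log_2 M/\log_3(M+N)}$ of some $u(n_j)$ on the $\cJ$-side, and then uses Lemma~\ref{lem:cong} to bound the number of admissible $n_j$ by $O(N/\log d+1)$. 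With the assumed lower bound on $M$ this single constrained coordinate already yields $N^{s-1+o(1)}$.

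Your argument, however, has a real gap at the counting step. Once you reduce to $\sum k_i=0$ and $\sum k_i m_i=0$ with $m_i\in[1,N]$, these two linear conditions are essentially \emph{vacuous} for $s\ge 3$: for any tuple $(m_1,\dots,m_s)$ that is not of the special shape ``$s-1$ coordinates equal'', the $(s-2)$-dimensional kernel of the $2\times s$ matrix with rows $(1,\dots,1)$ and $(m_1,\dots,m_s)$ contains an integer vector with all entries nonzero. Thus $\gg N^{s}$ tuples satisfy your linear constraints, and no saving of a full power of $N$ comes from the linear algebra. Your fallback is the ``residual identity'' $\prod_i v(n_i)^{k_i}=1$ with $v(n)=u(n)\lambda^{-n}$, where you assert that $v(n_i)/v(n_j)$, being within $O(\theta^{M})$ of $1$ and of ``bounded height'', must equal $1$. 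But the logarithmic height of $v(n_i)/v(n_j)=u(n_i)\lambda^{n_j-n_i}/u(n_j)$ is not bounded: it is of order $M$, since $h(u(n_i))\asymp M$. A Liouville-type inequality then only gives $|v(n_i)/v(n_j)-1|\gg e^{-cM}$, which is of the \emph{same} exponential scale as the upper bound $O(\theta^{M})$, so no contradiction arises and nothing forces equality. (Concretely, for $u(n)=2^{n}+1$ one has $v(n)=1+2^{-n}$ and $v(n_i)/v(n_j)-1\asymp 2^{-M}$, perfectly consistent with the height.) A secondary issue is that your exponent bound $\max_i|k_i|\le N^{O(1)}$ is asserted rather than proved; standard bounds on minimal multiplicative relations are polynomial in the heights, hence in $M$, not in $N$. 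Without it, even the deduction of $\sum k_i=0$ from $(\sum k_i)M=-(\sum k_i m_i)$ is not justified. In short, the analytic reduction does not by itself produce the required saving, and the missing arithmetic input is precisely what Lemmas~\ref{lem:sqrfree} and~\ref{lem:cong} supply in the paper's argument.
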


\begin{rem}
The condition on $M$ in Theorem~\ref{thm:Ms-star M large} is chosen to achieve the strongest
possible bound. Examining its proof one can see that for $s < 4d-3$
one can also improve Theorem~\ref{thm:Ms-star}
for $M\ge \exp(N^\eta)$ with any $\eta > s/(4d-3)$ (but only for sequences with a dominant root).  
 \end{rem}

From the definition of \md\ of maximal rank, we have $\sM_1^*(M,N) =O(1)$, see~\cite[Lemma~2.1]{AmVia}. 
Hence, we see from~\eqref{eq:M and M*}
that in applying Theorem~\ref{thm:Ms-star} to bounding $\sM_s(M,N)$
the case of $s = 2$ becomes the
bottleneck. Thus,  we now investigate this case separately.

\begin{thm}
\label{thm:M2-star}
 Let $\vu$ be a simple non-degenerate sequence of order $d \ge 2$ with a dominant root.
Uniformly over $M\ge 0$, we have
\[
\sM_2^*(M,N) = N+ O(1).
\]
\end{thm}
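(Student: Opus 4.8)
**Proof proposal for Theorem 1.5 ($\sM_2^*(M,N) = N + O(1)$).**

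The plan is to reduce the problem to counting solutions of a single equation of the form $u(m)^k = u(n)^\ell$ with $m \ne n$ and $k\ell \ne 0$, and then to use the dominant root assumption to pin down the possible ratios $k/\ell$ very tightly. First I would record that the diagonal contribution $n_1 = n_2$ already gives $N$ pairs (each $u(n)$ is \md\ with itself of maximal rank only in the trivial sense, so one must be slightly careful — in fact for $s=2$ maximal rank forces $k_1 k_2 \ne 0$, so the pair $(n,n)$ is counted when $u(n)^{k}u(n)^{\ell}=1$ with $k+\ell=0$, i.e.\ always; this is exactly the $N$ term). So it suffices to show there are only $O(1)$ pairs with $n_1 \ne n_2$.

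For the off-diagonal pairs, write $m = \max\{n_1,n_2\}$, $n = \min\{n_1,n_2\}$, so $m > n$, and suppose $u(m)^k = u(n)^\ell$ for some nonzero integers $k,\ell$. Using non-degeneracy and simplicity, the standard asymptotic expansion gives $u(n) = \sum_{i=1}^{d} a_i \lambda_i^n$ with all $a_i \ne 0$; let $\lambda = \lambda_1$ be the dominant root, so $|u(n)| = |a_1|\,|\lambda|^{n}\(1 + O(\rho^{n})\)$ for some fixed $\rho = |\lambda_2/\lambda| < 1$ (after ordering). Taking logarithms of $|u(m)|^{|k|} = |u(n)|^{|\ell|}$ yields
\[
|k|\(n_1' \log|\lambda| + \log|a_1| + O(\rho^{m})\) = |\ell|\(n_2' \log|\lambda| + \log|a_1| + O(\rho^{n})\),
\]
where I have written $m,n$ again as $n_1',n_2'$ for the larger/smaller index. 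The key point is that this forces $|k|/|\ell|$ to be extremely close to the rational-like quantity $(n \log|\lambda| + \log|a_1|)/(m\log|\lambda| + \log|a_1|)$; more usefully, after clearing the logarithm I get a multiplicative relation $|u(m)|^{|k|} = |u(n)|^{|\ell|}$ with $|u(m)| \ge |u(n)|$ (for $n$ large, by dominance), which already forces $|k| \le |\ell|$, and an examination of the $p$-adic valuations or simply of the growth rate forces $|k| \log|u(m)| = |\ell|\log|u(n)|$ to have $\gcd(|k|,|\ell|)=1$ reduced form with both numerator and denominator $O(1)$ — because $\log|u(m)|/\log|u(n)| = 1 + O((m-n)/n)$ when $m-n$ is bounded, while if $m-n \to \infty$ the ratio stays bounded away from $1$, giving only finitely many admissible $(|k|,|\ell|)$, in fact essentially $|k|=|\ell|=1$ once $n$ is large enough.

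Having bounded the exponents, I would finish by the following dichotomy. Once we know $|k|, |\ell|$ lie in a fixed finite set, for each such pair $(k,\ell)$ the equation $u(m)^k u(n)^\ell = \pm 1$ (sign depending on parities) is, after moving to the dominant-root asymptotics, an equation of the shape $a_1^{k+\ell}\lambda^{km+\ell n}\(1+o(1)\) = \pm 1$; since $|\lambda|>1$ (dominance plus the sequence being unbounded for $d\ge 2$, using non-degeneracy to rule out $|\lambda|=1$) this forces $km + \ell n = O(1)$, hence $m$ and $n$ lie on $O(1)$ lines $km+\ell n = c$; on each such line, since $k\ell\ne 0$, $m$ determines $n$, and substituting back into $u(m)^k u(n)^\ell = 1$ gives a nontrivial equation in the single variable $m$ whose left-hand side grows (again by the dominant-root estimate, unless it is identically the relevant constant, which is excluded by non-degeneracy), so it has only $O(1)$ solutions. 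Summing the $O(1)$ contributions over the $O(1)$ pairs $(k,\ell)$ and $O(1)$ lines gives $\sM_2^*(M,N) = N + O(1)$ uniformly in $M$.

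The main obstacle I anticipate is making the exponent bound genuinely uniform in $M$: when $M$ is huge, $n$ and $m$ are both large, which actually helps (the $o(1)$ terms in the dominant-root asymptotics are uniformly small), but one must verify that no cancellation among the subdominant terms $a_i\lambda_i^n$, $i\ge 2$, spoils the lower bound $|u(n)| \gg |\lambda|^n$ — this is exactly where the dominant-root hypothesis is essential and where a clean, explicitly uniform form of the asymptotic $u(n) = a_1\lambda^n(1+O(\rho^n))$ (valid for all $n \ge n_0$ with $n_0$ and the implied constants depending only on $\vu$) must be invoked or proved. Everything else is a finite bookkeeping of the $O(1)$ exceptional pairs.
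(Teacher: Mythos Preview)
Your approach differs from the paper's and, as written, has genuine gaps. The paper handles the off-diagonal contribution via Lemma~\ref{lem:LRS-Pow} (Bugeaud--Kaneko on perfect powers in linear recurrences): writing the relation as $u(m)^a = u(n)^b$ with $a,b>0$ coprime, any $(a,b)\ne(1,1)$ forces one of $u(m),u(n)$ to be a nontrivial perfect power, so the corresponding index lies in a fixed finite set depending only on $\vu$; the remaining case $a=b=1$ gives $u(m)=u(n)$, hence $m=n$ up to $O(1)$ exceptions by the eventual monotonicity of $|u(n)|$. No bound on the size of $a,b$ is ever needed.

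Your attempt to work purely with dominant-root asymptotics breaks at two points. First, the asserted bound on $|k|,|\ell|$ is not established: with $\gcd(|k|,|\ell|)=1$ the equation $|k|\log|u(m)|=|\ell|\log|u(n)|$ pins down only the \emph{ratio} $|\ell|/|k|=\log|u(m)|/\log|u(n)|$, not the individual sizes, and a ratio lying in $(1,2)$, say, is still compatible with arbitrarily large coprime $|k|,|\ell|$. (Your closing remark also misplaces the difficulty: large $M$ is the easy regime, since then $m/n\to 1$; it is small $M$ that is delicate.) Second, on a line $km+\ell n = c$ the leading term $a_1^{k+\ell}\lambda^{km+\ell n}=a_1^{k+\ell}\lambda^{c}$ is \emph{constant} by construction, so the substituted expression does not grow as you claim; your parenthetical that non-degeneracy excludes this is incorrect. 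To get finiteness on such a line one would have to analyse the subdominant terms carefully, which you have not done. Without Lemma~\ref{lem:LRS-Pow} or a valid substitute for these two steps, the off-diagonal bound $O(1)$ is not proved.
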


Since, as we have mentioned,  $\sM_1^*(M,N)=O(1)$,  the bounds of
Theorems~\ref{thm:Ms-star}  and~\ref{thm:M2-star} inserted in~\eqref{eq:M and M*}
imply that if  $\vu$ is a simple non-degenerate sequence of order $d \ge 2$ with a dominant root then
\[
\sM_s(M,N)  \ll N^{s - 3/(4d-3)+o(1)},
\]
where the bottleneck comes from the bound on $\sM_3^*(M,N)$.

If $M\ge \exp(N \log_3N/\log_2N)$, then using instead Theorem~\ref{thm:Ms-star M large}, one obtains the upper bound
\[
\sM_s(M,N)  \ll N^{s - 1+o(1)},
\]
which matches the trivial lower bound~\eqref{eq:low Ms}.

\section{Preliminaries}

\subsection{Arithmetic properties of linear recurrence sequences}
In this section we collect various results about the arithmetic properties of a linear recurrence sequence that we   need for our main results. These include:
\begin{itemize}
\item a lower bound of square-free parts of elements in $\vu$,
\item a bound for the number of elements in $\vu$ that are $S$-units,
\item various  results on congruences with elements in $\vu$,  
\item a result on the finiteness of perfect powers in $\vu$.
\end{itemize}
Some of these are obtained under the condition that $\vu$ has a dominant root.

We start with a lower bound of Stewart~\cite[Theorem~1]{Ste} on the square-free part of elements in a linear recurrence.

For any integer $m$, we define $\rad(m)$ to be the largest square-free factor of $m$.

\begin{lemma}
\label{lem:sqrfree}
Let $\vu$ be a simple non-degenerate sequence of order $d \ge 2$ with a dominant root. Then there exist constants
 $C_1$ and $C_2$, which are effectively computable only in terms of $\vu$, such that if $n\ge C_2$,  
 then
\[
\rad(u(n))>n^{C_1(\log_2 n)/\log_3 n}.
\]
\end{lemma}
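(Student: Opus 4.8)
The assertion is exactly~\cite[Theorem~1]{Ste}, so there is nothing new to prove; we only recall the shape of the argument, since the same circle of ideas -- the theory of linear forms in logarithms, archimedean and $\mathfrak p$-adic -- reappears throughout the estimates below.

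One argues by contradiction. Because $\vu$ has a dominant root, the characteristic polynomial $f$ has a unique root $\lambda$ of largest modulus, which is therefore real and occurs with a nonzero coefficient in the generalised power sum expansion $u(n)=c_1\lambda^n+c_2\lambda_2^n+\dots+c_d\lambda_d^n$ over the splitting field $\K$ of $f$; hence
\[
\log|u(n)|=n\log|\lambda|+O(1).
\]
Suppose that $\rad(u(n))\le n^{C_1(\log_2 n)/\log_3 n}$ for some large $n$, with $C_1$ to be chosen. Writing $u(n)=\prod_{p\mid u(n)}p^{v_p(u(n))}$, this bound forces the number $\omega(u(n))$ of prime divisors of $u(n)$ to be small, namely $\omega(u(n))\ll(\log n)(\log_2 n)/\log_3 n$, and by the pigeonhole principle some prime $p\mid u(n)$ then satisfies
\[
v_p(u(n))\gg\frac{\log|u(n)|}{\log\rad(u(n))}\gg\frac{n\log_3 n}{(\log n)(\log_2 n)},
\]
so that $u(n)$ is divisible by a very large power of $p$. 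Excluding the finitely many primes dividing the leading coefficient of $f$, the discriminant of $f$, and the denominators of the $c_i$ (this is what the constant $C_2$, and the restriction $n\ge C_2$, are for), one then bounds $v_p(u(n))$ from above: passing to a prime $\mathfrak p\mid p$ of $\K$, high $\mathfrak p$-adic divisibility of $u(n)=c_1\lambda^n+\dots+c_d\lambda_d^n$ makes a ratio $c_i\lambda_i^n/(c_j\lambda_j^n)$ extremely $\mathfrak p$-adically close to $1$, i.e.\ a linear form in $\mathfrak p$-adic logarithms of fixed algebraic numbers with integer coefficients of size $O(n)$ is very small, and Yu's $\mathfrak p$-adic estimates for linear forms in logarithms bound $v_{\mathfrak p}(u(n))$ -- and with it $v_p(u(n))$ -- in terms of $p$ and $\log n$ only. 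The degenerate case in which almost all of $u(n)$ is absorbed into a single prime is excluded separately using the finiteness of perfect powers in $\vu$. Balancing these estimates for $v_p(u(n))$, one finds that a sufficiently small choice of $C_1$ makes them incompatible, which is the desired contradiction.

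The main obstacle -- and the reason the exotic factor $(\log_2 n)/\log_3 n$ appears in the exponent -- is precisely this final balancing: the $\mathfrak p$-adic bound for $v_p(u(n))$ degrades as $p$ grows, so the radical must be taken small enough to confine the relevant prime, yet the pigeonhole step must still extract a genuinely large prime power; optimising these opposing requirements in $n$ produces exactly the exponent in the statement. Carrying out this optimisation quantitatively, and controlling the subdominant roots $\lambda_i/\lambda$ (whose $\mathfrak p$-adic size is not governed by the archimedean dominance of $\lambda$) well enough to isolate a clean linear form, is the technical heart of~\cite{Ste}, to which we refer for the details.
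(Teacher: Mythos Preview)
Your proposal is correct and matches the paper's approach: the paper does not prove this lemma at all but simply quotes it as~\cite[Theorem~1]{Ste}, exactly as you do in your first sentence. The additional sketch you provide of Stewart's argument is extra exposition not present in the paper.
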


We also need the following upper bound from~\cite[Theorem~1 and Corollary]{Shp2}
 on the number of terms of $\vu$ composed
out of primes from a given set.  We note that the condition of the exponential growth of the terms of $\vu$,
assumed in~\cite{Shp2},  is now known to hold for non-degenerate recurrence sequences,
see~\cite{Eve,vdPSch}.
Hence we have the following result.

\begin{lem}
\label{lem:S-unit_LRS}
Let $\vu$ be a  non-degenerate sequence of order $d \ge 2$ and let $S$ be an arbitrary set of $r$ primes. Then, for $M\ge 0$, the number  $A(S;M,N)$ of terms $u(M+1),\ldots,u(M+N)$,
composed exclusively of primes from $S$, satisfies
\[
A(S;M,N) \ll \begin{cases}
rNM^{-1}\log(N+M)& \text{for } M \ge 1,\\
r(\log N)^2& \text{for } M =0 .
\end{cases}
\]
\end{lem}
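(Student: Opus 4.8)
The plan is to convert the exponential growth of $\vu$ into an arithmetic constraint and then count. Throughout, fix constants $c>1$, $C\ge c$ and $n_{0}$, depending only on $\vu$, with $|u(n)|\le C^{\,n}$ for all $n\ge1$ and $|u(n)|\ge c^{\,n}$ for $n\ge n_{0}$; the lower bound is the hypothesis assumed in \cite{Shp2}, and for non-degenerate $\vu$ it is supplied unconditionally by \cite{Eve,vdPSch}. The at most $n_{0}$ indices $n\le n_{0}$ contribute $O(1)$, which is absorbed into the bound. For every remaining $n\in(M,M+N]$ with $u(n)$ an $S$-unit, write $u(n)=\pm\prod_{p\in S}p^{v_{p}(u(n))}$; then $\sum_{p\in S}v_{p}(u(n))\log p=\log|u(n)|\ge n\log c$, so some prime $p=p(n)\in S$ has $v_{p}(u(n))\ge n\log c/(r\log p)$, and since $p(n)\le|u(M+N)|\le C^{\,M+N}$ this is in any case $\gg n/(r(M+N))$. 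Partitioning the $S$-unit indices according to the prime $p(n)$ reduces the lemma to bounding, for each fixed $p\in S$ and uniformly in $p$, the quantity
\[
B_{p}=\#\{n\in(M,M+N]:~v_{p}(u(n))\ge n\log c/(r\log p)\},
\]
and then summing the at most $r$ such quantities; the target is $B_{p}\ll NM^{-1}\log(N+M)$ when $M\ge1$ and $B_{p}\ll(\log N)^{2}$ when $M=0$.

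To bound $B_{p}$ one studies how often a fixed prime can divide the terms of $\vu$ to a prescribed power inside a window of length $N$. The sequence $\vu$ modulo $p^{\nu}$ is eventually periodic, so $\{n:~v_{p}(u(n))\ge\nu\}$ is a finite set together with a union of arithmetic progressions; restricting to one progression $n=a+\pi t$ (with $\pi$ a period of $\vu$ modulo $p$) on which $p\mid u(n)$, the map $t\mapsto u(a+\pi t)$ extends to a $p$-adic analytic function $\Z_{p}\to p\Z_{p}$, and $p$-adic Weierstrass preparation exhibits it as $p^{\alpha_{a}}$ times a distinguished polynomial of degree $m_{a}\ll_{d}1$ times a $p$-adic unit, where $\alpha_{a}$ is the minimal valuation of $u$ along the progression; since only $n\le M+N$ matter we may take $\alpha_{a}\ll(M+N)/\log p$. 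Hence the indices with $v_{p}(u(n))\ge\nu$ lie in a controlled family of arithmetic progressions whose moduli are each $\gg p^{(\nu-\alpha_{a})/m_{a}}$, growing exponentially in $\nu$. Counting integer points in these progressions and summing over $n\in(M,M+N]$ after a dyadic splitting in $n$ yields the two claimed bounds on $B_{p}$: for $M\ge1$ the valuation $\nu\gg M/\log p$ forced on the whole window produces the saving $M^{-1}$, and for $M=0$ the two logarithms come from pairing the trivial estimate $v_{p}(u(n))\le\log|u(n)|/\log p\ll\log N$ with the $O(\log N)$ dyadic ranges of $n$.

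The step I expect to be the real obstacle is making the $p$-adic input of the second paragraph \emph{uniform in $p$}: the number of relevant progressions, the degrees $m_{a}$, the shifts $\alpha_{a}$, and all implied constants must be controlled purely in terms of $d$ and the window length $M+N$, never in terms of $p$ itself, because $S$ is allowed to contain arbitrarily large primes. Closely tied to this is the handling of ``exceptional'' primes $p$ dividing a positive proportion of the terms of $\vu$ (such primes do exist for non-degenerate $\vu$), where the forced valuation is merely $v_{p}(u(n))\ge1$ and one must instead exploit that $u(n)/p^{v_{p}(u(n))}$ is again an $S$-unit composed of fewer primes, so that an induction on $r$ applies. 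Securing this uniform control is precisely the content of \cite[Theorem~1 and Corollary]{Shp2}, which I would invoke directly; the far stronger assertion that $\vu$ has only finitely many $S$-unit terms altogether belongs to the theory of $S$-unit equations and is not needed for the $N$-dependent bound sought here.
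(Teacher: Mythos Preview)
Your proposal is correct and matches the paper's approach exactly: the paper does not prove this lemma but simply cites \cite[Theorem~1 and Corollary]{Shp2}, supplementing the exponential-growth hypothesis there via \cite{Eve,vdPSch}, which is precisely what you do in your final paragraph. Your first two paragraphs are an informal sketch of the mechanism behind \cite{Shp2} rather than an independent proof, and you acknowledge this yourself; the paper omits even this much.
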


We now present two results regarding solutions to certain congruences with elements in a linear recurrence sequence.
We start with a result, which follows from~\cite[Lemma~2 and Lemma~3]{Shp2}.

\begin{lemma}
\label{lem:cong}
Let $\vu$ be a non-degenerate sequence of order $d \ge 2$ and let $m\ge 1$ be an integer.
Then we have
\[
\sharp\{n \in \Z\cap [M+1,M+N]:~u(n)\equiv 0\pmod{m}\}\ll N/\log m+1.
\]
\end{lemma}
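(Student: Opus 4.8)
The plan is to reduce the count of $n\in[M+1,M+N]$ with $u(n)\equiv 0\pmod m$ to a problem about the period of $\vu$ modulo $m$, and then to bound that period from below using the non-degeneracy of $\vu$. First I would recall that every linear recurrence sequence is eventually periodic modulo $m$: indeed, the vector of $d$ consecutive terms $\(u(n),\ldots,u(n+d-1)\)$ takes values in $(\Z/m\Z)^d$, so some shift of the sequence is purely periodic modulo $m$ with period $T=T(m)\le m^d$; moreover, since we only care about an upper bound up to a factor, we may freely absorb the (bounded, $\le d$) pre-period into the $+1$ term. Within one full period of length $T$, let $Z$ denote the number of residues $n$ with $u(n)\equiv 0\pmod m$. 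Then the total count over $[M+1,M+N]$ is at most $Z\(\lceil N/T\rceil + 1\) \ll ZN/T + Z$, so everything comes down to showing $Z/T \ll 1/\log m$ and $Z\ll N/\log m + 1$ — that is, to a good lower bound $T\gg \log m$ together with control of $Z$ itself.

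The key input, which is exactly what \cite[Lemma~2 and Lemma~3]{Shp2} provide, is a lower bound for the period $T(m)$ of a non-degenerate recurrence: one has $T(m)\gg \log m$ (in fact one typically gets something like $T(m)\ge c\,p$ for the relevant primes $p\mid m$, which aggregates to a bound of order $\log m$ for general $m$). The mechanism behind such a bound is that if the period were short, then the companion matrix $A$ of $f$ would satisfy $A^T\equiv I$ modulo $m$ for a small $T$, forcing the roots of $f$ (equivalently the eigenvalues of $A$) to be roots of unity times each other modulo the primes dividing $m$; non-degeneracy rules this out and, quantified via heights and resultants of the $1-\lambda_i/\lambda_j$, yields $T\gg\log m$. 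Granting this, the first summand is handled: the number of complete periods inside an interval of length $N$ is $\le N/T + 1 \ll N/\log m + 1$, and within each period the zero set of $u(n)\bmod m$ has size $Z$ which — again by the same lemmas, since a non-degenerate recurrence cannot vanish identically on a residue class of bounded modulus — is $O(1)$ per "short" cycle, or more precisely contributes a bounded multiplicative constant. Combining, $\sharp\{n\in[M+1,M+N]: u(n)\equiv 0\ (\mathrm{mod}\ m)\}\ll N/\log m + 1$, as claimed; the $+1$ also covers the degenerate regime $N<T$ where at most $O(1)$ solutions can occur in the interval.

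The main obstacle is the period lower bound $T(m)\gg\log m$ uniformly in $m$ (not just for $m$ prime): for prime powers and for $m$ with many small prime factors one must be careful that the contributions from different primes do not conspire, and this is precisely the technical content extracted from \cite[Lemma~2 and Lemma~3]{Shp2}. Since those lemmas are available to us, the remaining work is the bookkeeping above — splitting the interval into full periods plus a remainder, and tracking that the pre-period and the remainder each cost only an additive $O(1)$. I would therefore organise the write-up as: (i) reduce to purely periodic case, absorbing pre-period; (ii) quote the period bound $T(m)\gg\log m$ and the per-period zero count from \cite{Shp2}; (iii) sum $Z\lceil N/T\rceil$ to conclude.
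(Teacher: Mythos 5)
The paper does not actually prove this lemma; it simply quotes it as a consequence of \cite[Lemmas~2 and~3]{Shp2}, so what matters is whether your reconstruction of the underlying argument is sound. It is not: the reduction to ``zeros per period'' has two genuine gaps. First, the period $T(m)$ can be as large as $m^d$, so the typical situation is $N<T$, and there your bookkeeping $Z\(\lceil N/T\rceil+1\)\approx 2Z$ says nothing about how the $Z$ zeros of one period are distributed --- they could a priori all lie inside the window $[M+1,M+N]$. Your claim that ``at most $O(1)$ solutions can occur'' when $N<T$ is unsupported and is essentially the whole content of the lemma. Second, $Z$ is not $O(1)$ in general: for instance $u(n)=(2^n-1)(3^n-1)$ is a simple non-degenerate recurrence of order $4$, and modulo a prime $p$ its zeros are the multiples of $\ord_p(2)$ or of $\ord_p(3)$, so $Z$ is of size about $T/\ord_p(2)+T/\ord_p(3)$, which is unbounded. (The density of zeros is still $\ll 1/\log p$ here, but only because $\ord_p(2),\ord_p(3)\ge \log_2 p$ --- a fact your period argument does not see.)

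The period lower bound $T(m)\gg\log m$ that you correctly identify (via $A^T\equiv I\pmod m$ forcing $m\le C^T$) is true but insufficient: a long period is compatible with the zeros clustering in a short sub-interval. What the cited lemmas of \cite{Shp2} actually supply is a \emph{spread (gap) principle}: if $u(n_1)\equiv\cdots\equiv u(n_d)\equiv 0\pmod m$ with $n_1<\cdots<n_d$, then $n_d-n_1\gg\log m$. This is proved by writing $u(n)=\sum_j a_j\lambda_j^n$ and observing that the generalized Vandermonde determinant $\det\(\lambda_j^{\,n_i-n_1}\)$ is a nonzero algebraic integer (nonzero precisely by non-degeneracy) whose norm is bounded by $\exp\(O(n_d-n_1)\)$, yet must be divisible by (a divisor of) $m$; hence $m\le\exp\(O(n_d-n_1)\)$. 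Partitioning $[M+1,M+N]$ accordingly, any interval of length $c\log m$ contains at most $d-1$ solutions, which gives the stated bound $\ll N/\log m+1$ uniformly in $M$. You should replace steps (i)--(iii) of your outline with this determinant/height argument; the periodicity framing cannot be repaired without it.
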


The second  bound that we need holds modulo primes and follows from~\cite[Lemma~6]{BFKS}. In \cite{BFKS} it is formulated only for the interval $[1,N]$,
however the result is uniform with respect to the sequence
$\vu$ and hence it holds uniformly with respect to $M$, too.

Let  $\overline{\F}_p$ be the algebraic closure of the finite field $\F_p$
of $p$ elements.

\begin{lem}
\label{lem::Cong_LRS}
Let $\vu$ be a simple  sequence of order $d \ge 2$. Let $M\ge 0$ and $N\ge 1$. Let $p$ be a prime, and set
\[
\varrho_p=\min\limits_{1\leq i< j\leq d} r_{ij},
\]
where $r_{ij}$ is the multiplicative order of $\lambda_i/\lambda_j$ in $\overline{\F}_p$, with the $\lambda_i$ being the roots of the characteristic polynomial of $\vu$. Then we have
\[
\sharp\{n \in \Z\cap [M+1,M+N]:~u(n)\equiv 0\pmod{p}\}\ll N(N^{-1}+\varrho_p^{-1})^{1/(d-1)}.
\]
\end{lem}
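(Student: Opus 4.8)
We may assume that $f(0)\ne 0$, since the quantities $r_{ij}$ — multiplicative orders of the ratios $\lambda_i/\lambda_j$ — presuppose that none of the roots vanishes. The plan is to reduce to the case $M=0$, which is exactly~\cite[Lemma~6]{BFKS}, by passing to the shifted sequence $\vu^{(M)}=(u(M+n))_{n=1}^{\infty}$. This sequence satisfies the same recurrence as $\vu$, and in the Binet representation $u(n)=\sum_{i=1}^{d}a_i\lambda_i^{n}$ — in which every $a_i$ is nonzero, precisely because $\vu$ has order exactly $d$ — one has $u(M+n)=\sum_{i=1}^{d}(a_i\lambda_i^{M})\lambda_i^{n}$. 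Since no $\lambda_i$ vanishes, all the coefficients $a_i\lambda_i^{M}$ are again nonzero, so $\vu^{(M)}$ is itself a simple sequence of order $d$ with the same characteristic polynomial $f$; in particular it has the same roots modulo $p$, hence the same value $\varrho_p$. Because
\[
\sharp\{n\in\Z\cap[M+1,M+N]:~p\mid u(n)\}=\sharp\{m\in\Z\cap[1,N]:~p\mid u^{(M)}(m)\},
\]
the asserted bound follows on applying~\cite[Lemma~6]{BFKS} to $\vu^{(M)}$ over the interval $[1,N]$. The only feature this uses is that the implied constant in~\cite[Lemma~6]{BFKS} is uniform over simple sequences of order $d$ (it may be taken to depend on $d$ alone), which is exactly what allows $\vu$ to be replaced by $\vu^{(M)}$; this is the uniformity mentioned just above. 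We note, for reassurance, that the finitely many primes $p$ dividing the discriminant of $f$ are harmless: then two roots of $f$ coincide modulo $p$, so $\varrho_p=1$ and the asserted bound is simply $\ll N$, which is trivial.

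We invoke~\cite[Lemma~6]{BFKS} as a black box, but it is worth recalling the shape of its proof, since this is where the exponent $1/(d-1)$ comes from. For a prime $p$ not dividing the discriminant of $f$ one reduces modulo $p$ and writes $u(n)\equiv\sum_{i=1}^{d}a_i\lambda_i^{n}\pmod p$; suppose there are $k$ solutions $m_1<\dots<m_k$ in $[1,N]$. If $k\ge d$, a pigeonhole argument on the $k-1$ consecutive gaps produces $d$ of these solutions inside a subinterval of length $\ll N/k$. For any $d$ such solutions $m_{i_1}<\dots<m_{i_d}$ the vector $(a_1,\dots,a_d)$ lies in the kernel of the matrix $(\lambda_i^{m_{i_j}})_{1\le i,j\le d}$, so the determinant of this matrix — a generalised Vandermonde — must vanish. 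One then shows that the vanishing of this determinant, together with $r_{ij}\ge\varrho_p$ for all $i\ne j$, forces the $d$ solutions to span an interval of length $\gg\varrho_p^{1/(d-1)}$. Comparing this with the ``$\ll N/k$'' from the pigeonhole gives $k\ll N\varrho_p^{-1/(d-1)}$, while the same span estimate shows $k\le d-1$ once $N$ drops below a constant multiple of $\varrho_p^{1/(d-1)}$; combining these (and the trivial $k\le N$) yields $k\ll N(N^{-1}+\varrho_p^{-1})^{1/(d-1)}$.

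The step I expect to be the genuine obstacle — and the reason the exponent is $1/(d-1)$ rather than $1$ — is the implication ``vanishing generalised Vandermonde $+$ large orders $r_{ij}$ $\Rightarrow$ span $\gg\varrho_p^{1/(d-1)}$''. It cannot be read off from the degree bound for sparse polynomials, since over $\overline{\F}_p$ a linear recurrence of order $d$ can have many zeros in a short window, so the obstruction has to be extracted from the arithmetic of the $r_{ij}$. A natural tool is the order-lowering operator $w(n)\mapsto\lambda_d^{H}w(n)-w(n+H)$: for any shift $1\le H<\varrho_p$ it annihilates the $\lambda_d$-component of $w=\sum_{i=1}^{d}a_i\lambda_i^{n}$ while keeping every remaining coefficient nonzero, because $(\lambda_i/\lambda_d)^{H}\ne1$; iterating it $d-1$ times reduces $u$ to a nonvanishing geometric progression, which already forbids $u$ from vanishing on a full combinatorial cube with all edge-lengths below $\varrho_p$. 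Turning this qualitative fact into the quantitative span estimate $\gg\varrho_p^{1/(d-1)}$ — while keeping all pairwise orders simultaneously in play and treating the branches where some $a_i$ vanishes modulo $p$ (there one works with fewer roots, which only improves the exponent) — is the technical heart, and it is carried out in~\cite{BFKS}.
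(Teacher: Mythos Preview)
Your proposal is correct and matches the paper's approach: the paper simply cites \cite[Lemma~6]{BFKS} and observes that its uniformity in $\vu$ permits the shift from $[1,N]$ to $[M+1,M+N]$, which is precisely your reduction via the shifted sequence $\vu^{(M)}$. Your additional sketch of why the exponent $1/(d-1)$ arises is extra detail beyond what the paper provides, but it is accurate and does not deviate from the intended argument.
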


The following result is certainly well-known and is based on classical ideas of Hooley~\cite{Hool}, however for completeness we present a short  proof.

\begin{lem}
\label{lem:MultOrd}
For $R\ge 2$ we consider the set
\[
\cW(R)=\{p\ \text{prime}:~\varrho_p\leq R\}.
\]
Then  $\sharp \cW(R) \ll R^2/\log R$.
\end{lem}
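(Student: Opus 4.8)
The plan is to exhibit each $p\in\cW(R)$ as a prime divisor of one of $\binom{d}{2}$ explicit nonzero integers whose sizes we can control, and then to invoke the classical bound $\omega(n)\ll \log n/\log\log n$ on the number $\omega(n)$ of distinct prime factors of $n$; this is the Hooley-type mechanism alluded to above. Let $\lambda_1,\dots,\lambda_d$ be the roots of the characteristic polynomial $f$ of $\vu$ (distinct, as $\vu$ is simple), let $K=\Q(\lambda_1,\dots,\lambda_d)$ and let $\cO_K$ be its ring of integers. Fix a pair $1\le i<j\le d$. Since $\vu$ is non-degenerate, no ratio $\lambda_i/\lambda_j$ with $i\ne j$ is a root of unity, so $\lambda_i^r-\lambda_j^r\in\cO_K$ is nonzero for every $r\ge 1$, and therefore
\[
m_{ij}(r):=\Nm_{K/\Q}\(\lambda_i^r-\lambda_j^r\)
\]
is a nonzero rational integer. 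As every conjugate of $\lambda_i$ and of $\lambda_j$ is again a root of $f$, every archimedean absolute value of $\lambda_i^r-\lambda_j^r$ is at most $2\Lambda^r$ with $\Lambda=\max\{|\mu|:\ f(\mu)=0\}$, whence $|m_{ij}(r)|\le C_3^{\,r}$ for some constant $C_3$ depending only on $\vu$.

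Next I would locate the primes in $\cW(R)$. If $\varrho_p\le R$ then some pair $(i,j)$ satisfies $r=r_{ij}\le R$. After discarding the finitely many primes dividing $\prod_{k}\Nm_{K/\Q}(\lambda_k)$ — a finite set $\cE$ that does not depend on $R$ — each $\lambda_k$ is invertible modulo every prime ideal of $\cO_K$ above $p$, and then the assertion that $\lambda_i/\lambda_j$ has multiplicative order $r$ in $\overline{\F}_p$ means precisely that $\lambda_i^r\equiv\lambda_j^r\pmod{\mathfrak p}$ for some prime $\mathfrak p\mid p$ of $\cO_K$. Taking norms gives $p\mid m_{ij}(r)$, so $p$ divides the nonzero integer $M_{ij}(R):=\prod_{r=1}^{R}m_{ij}(r)$. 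Hence $\cW(R)\subseteq\cE\cup\bigcup_{1\le i<j\le d}\{p:\ p\mid M_{ij}(R)\}$.

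It remains to count. From $|m_{ij}(r)|\le C_3^{\,r}$ we obtain $\log|M_{ij}(R)|\le(\log C_3)\sum_{r=1}^{R}r\ll R^2$, so $\omega\(M_{ij}(R)\)\ll R^2/\log R$ by the classical bound on the number of prime divisors of an integer (using that $t/\log t$ is increasing). Summing over the $\binom{d}{2}=O(1)$ pairs and adding $\sharp\cE=O(1)$ gives $\sharp\cW(R)\ll R^2/\log R$; and for $R$ in any bounded range $\cW(R)$ is in any case a finite set of bounded cardinality, so the estimate holds trivially there.

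There is no deep obstacle; the two points that require a little care are ensuring the norms $m_{ij}(r)$ are genuinely nonzero — this is exactly where non-degeneracy of $\vu$ is used — and, when passing from ``$\lambda_i/\lambda_j$ has small order modulo $p$'' to ``$p\mid m_{ij}(r)$'', checking that the primes one discards (those at which some $\lambda_k$ reduces to $0$) form a fixed finite set independent of $R$.
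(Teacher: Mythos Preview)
Your argument is correct and follows essentially the same route as the paper: both exhibit each $p\in\cW(R)$ as a divisor of a nonzero integer built from the norms $\Nm_{K/\Q}(\lambda_i^r-\lambda_j^r)$ with $r\le R$, bound that integer by $\exp(O(R^2))$, and apply the classical estimate $\omega(n)\ll\log n/\log\log n$. The only cosmetic differences are that the paper works with a single product $Q(R)$ over all pairs and all $r\le R$ rather than your per-pair products $M_{ij}(R)$, and that you are more explicit about the finite exceptional set $\cE$ and the size bound $|m_{ij}(r)|\le C_3^{\,r}$, which the paper leaves implicit.
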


\begin{proof}
Write $\lambda_1,\dots,\lambda_q$
for the distinct roots of the characteristic polynomial of $\vu$.

  For $R \ge 2$, let
 \[
 Q(R) = \prod_{\rho \leq R} \prod_{1\leq i<j\leq q} \Nm_{K/\Q}(\lambda_i^\rho-\lambda_j^\rho),
 \]
 where $\Nm_{K/\Q}$ is the norm from the splitting field $K$ of $f$ to ${\mathbb  Q}$.
Note that $Q(R)\neq 0$ because $\lambda_i/\lambda_j$ is not a root of unity and since $\lambda_i$ and $\lambda_j$
are algebraic integers we also have $ Q(R)  \in \Z$.

Clearly, for any prime $p$ with $\varrho_p\leq R$ we have $p \mid Q(R)$, hence $\sharp \cW(R)\le \omega(Q(R))$,
where $\omega(k)$ is the number
of prime divisors of an integer $k \ge 1$.
As clearly $\omega(k)! \le k$,
by the Stirling formula we get
\[
\sharp \cW(R) \ll \frac{\log Q(R)}{\log \log Q(R)} .
\]
Since obviously $\log Q(R) \ll R^2$, the result follows.
\end{proof}

Finally, we need a result on the finiteness of perfect powers in
linear recurrence sequences with a dominant root.
The most general and convenient form for us, which is built on several previous
results in this direction,   is given by
of Bugeaud and Kaneko~\cite[Theorem~1.1]{BuKa}.

\begin{lem}
\label{lem:LRS-Pow}
Let $\vu$ be a simple non-degenerate sequence of order $d \ge 2$ with a dominant root.
Then the equation $u(n) = m^k$ has only finitely many solutions in integer
$k\ge 2$, $m \ne 0$, $n \ge 1$.
\end{lem}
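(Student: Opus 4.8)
The plan is to use the dominant root to convert the equation $u(n)=m^{k}$ into an inequality governed by linear forms in logarithms, bound the exponent $k$, and then bound the index $n$. First I would make two harmless reductions. Since $\vu$ is simple of order exactly $d$ with dominant root $\lambda_{1}$, we may write $u(n)=\sum_{i=1}^{d}a_{i}\lambda_{i}^{n}$ with all $a_{i}\neq 0$ and with $\lambda_{1}$ real; in particular $|u(n)|=|a_{1}|\,|\lambda_{1}|^{n}(1+o(1))\to\infty$, so the values $|m|\le 1$ account for only finitely many $n$. Writing $k=q\cdot(k/q)$ for a prime $q\mid k$ shows that the set of $n$ occurring in some solution with $k\ge 2$ coincides with the set of $n$ for which $u(n)=z^{q}$ for some prime $q$ and some integer $z$ with $|z|\ge 2$. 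Moreover, once this set of $n$ is known to be finite we are done, since for each such $n$ the fixed integer $u(n)$ has only finitely many representations as a perfect power with exponent $\ge 2$.

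Put $\theta=|\lambda_{2}|/|\lambda_{1}|<1$, where $\lambda_{2}$ is a root of $f$ of second-largest modulus. From $|u(n)-a_{1}\lambda_{1}^{n}|\ll|\lambda_{2}|^{n}$ we get, for any solution $u(n)=z^{q}$,
\[
\left|\frac{z^{q}}{a_{1}\lambda_{1}^{n}}-1\right|\ll\theta^{n},
\]
so that the linear form $\Lambda=q\log|z|-n\log|\lambda_{1}|-\log|a_{1}|$ (with a bounded extra logarithm absorbing signs and the conjugates of $\lambda_{1}$ when $\lambda_{1}<0$, or read $p$-adically when $\lambda_{1}\in\Z$) satisfies $\log|\Lambda|\le -cn+O(1)$ with $c=\log(1/\theta)>0$. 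On the other hand $\lambda_{1}$ and $a_{1}$ are fixed algebraic numbers, the coefficients of $\Lambda$ are $\ll n$ in absolute value, and $\mathrm{h}(z)=\log|z|\le q^{-1}\log|u(n)|\ll n/q$, so Baker's theory of linear forms in logarithms (in the explicit form of Matveev) gives $\log|\Lambda|\gg -(n/q)\log n$. Comparing the two estimates yields $cn\ll (n/q)\log n$, that is, $q\ll\log n$. A cruder route to a bound on $q$ uses Lemma~\ref{lem:sqrfree}: from $\rad(u(n))=\rad(z)\le|z|=|u(n)|^{1/q}$ and $\rad(u(n))>n^{C_{1}(\log_{2}n)/\log_{3}n}$ one reads off $q\ll n\log_{3}n/(\log n\,\log_{2}n)$, which is much weaker.

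The main obstacle is the surviving range $2\le q\ll\log n$: here $q$ is small but may still tend to infinity along a hypothetical infinite family of solutions, and then the single application of Baker above does not close the argument, while Lemma~\ref{lem:sqrfree} is of no further use. If $q$ stays bounded we are done by the classical finiteness, with effective bounds, of $q$-th powers in recurrence sequences with a dominant root, going back to Peth\H{o} and, for binary sequences, to Shorey--Stewart. To exclude $q\to\infty$ one has to work harder: the refinement $z^{q}=a_{1}\lambda_{1}^{n}(1+O(\theta^{n}))$ shows that $z$ is extremely close to the algebraic number $(a_{1}\lambda_{1}^{n})^{1/q}$, of degree $\ll q$ and logarithmic height $\ll n/q$, and a more careful Diophantine analysis — combining Archimedean and $p$-adic linear forms estimates with a gap principle relating distinct solutions — is needed to bound $n$ for $q$ large. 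This delicate combination is precisely the technical heart of Bugeaud--Kaneko~\cite[Theorem~1.1]{BuKa}, which builds on the works just mentioned; given a bound on $n$ in all solutions, the lemma follows by the reduction of the first paragraph.
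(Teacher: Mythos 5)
The paper gives no proof of this lemma at all: it is quoted directly as Theorem~1.1 of Bugeaud--Kaneko~\cite{BuKa}, and your proposal, after an accurate outline of the standard Baker-method strategy (bounding the exponent by $q\ll\log n$ and then needing to bound $n$), explicitly concedes the remaining case of unbounded $q$ to that very same reference. So your argument is essentially the same as the paper's --- a citation of \cite{BuKa} --- with some correct but non-self-contained motivation added on top.
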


\subsection{Vertex covers}
We need the following graph-theoretic result.

\begin{lem}
\label{lem:GraphCover}
Let $G$ be a graph with vertex set $\cV$, having no isolated vertex. Put $\ell=\sharp \cV$. Then there exists $ \cV_1\subseteq  \cV$ with $\sharp \cV_1\leq \ell/2$ such that for any $v_2\in  \cV_2= \cV\setminus  \cV_1$ there exists a vertex $v_1\in  \cV_1$ which is a neighbour  of $v_2$.
\end{lem}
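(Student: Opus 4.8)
The plan is to construct $\cV_1$ greedily, always adding the endpoint of an edge that ``covers'' as much of the remaining graph as possible, and to track the size of $\cV_1$ against the number of vertices it dominates. Concretely, I would process the graph in rounds. In each round, pick any edge $\{x,y\}$ that still has at least one endpoint not yet placed in $\cV_1$ and whose other endpoint is not yet covered; put \emph{one} suitably chosen endpoint into $\cV_1$ and declare its neighbours ``covered''. The point is to argue that each vertex we add to $\cV_1$ can be charged to at least one \emph{distinct} vertex that it dominates (namely the other endpoint of the edge that triggered its selection), so that at the end $\sharp\cV_1 \le \sharp\cV_2$, which forces $\sharp\cV_1 \le \ell/2$.

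In more detail, I would maintain a partition of $\cV$ into three sets: $\cV_1$ (chosen), $D$ (dominated by $\cV_1$ but not in $\cV_1$), and $U$ (untouched). Initially $\cV_1 = D = \emptyset$ and $U = \cV$. While $U \ne \emptyset$, pick $u \in U$; since $G$ has no isolated vertex, $u$ has a neighbour $w$. If $w \in \cV_1$, simply move $u$ from $U$ to $D$ (it is now dominated, and $w$ was already charged). Otherwise $w \in U \cup D$; add $w$ to $\cV_1$, move all neighbours of $w$ out of $U$ (and out of any bookkeeping) into $D \setminus \cV_1$ as appropriate, and note that $u$ becomes a neighbour of $w \in \cV_1$. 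The key invariant is that whenever we add a vertex $w$ to $\cV_1$, we simultaneously move at least one vertex (namely $u$, which is distinct from $w$ and from every previously added vertex since it was in $U$) permanently into $\cV_2 = \cV \setminus \cV_1$. Hence there is an injection from $\cV_1$ into $\cV_2$, giving $\sharp\cV_1 \le \sharp\cV_2$ and therefore $\sharp\cV_1 \le \ell/2$. By construction every $v_2 \in \cV_2$ is dominated: it was moved into $D$ precisely because one of its neighbours lies in $\cV_1$.

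The only subtlety to handle carefully is the case analysis when the neighbour $w$ of the picked vertex $u$ has \emph{already} been absorbed into $\cV_1$ in an earlier round — there we must not add anything new to $\cV_1$, just reclassify $u$ as dominated, and this costs nothing. One must also make sure that when $w$ is added to $\cV_1$ it is removed from whatever set it was in, so that the three sets stay disjoint and the charging map stays injective. Because the process strictly shrinks $U$ at every step, it terminates after at most $\ell$ rounds. I expect no real obstacle here; the main point is simply to organize the greedy argument so that the injection $\cV_1 \hookrightarrow \cV_2$ is transparent. (Alternatively, one could phrase the whole thing as: take a maximal matching $\cM$ in $G$, let $\cV_1$ be one endpoint from each edge of $\cM$; maximality of $\cM$ guarantees every vertex outside $\cV(\cM)$ has a neighbour in $\cV(\cM)$, but one then needs the slightly stronger observation that it has a neighbour among the \emph{chosen} endpoints, which requires choosing the endpoints of $\cM$ consistently — the round-based formulation above sidesteps this bookkeeping.)
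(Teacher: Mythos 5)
There is a genuine gap in your greedy argument: the word ``permanently'' in your key invariant is not justified by the procedure you describe. When you add $w$ to $\cV_1$ you charge it to the vertex $u$ that triggered its selection and move $u$ into $D$; but nothing prevents $u$ (or any other charge-target sitting in $D$) from being promoted to $\cV_1$ in a later round, since you explicitly allow the chosen neighbour of a later untouched vertex to lie in $U\cup D$. Once a charge-target is promoted, the map from $\cV_1$ to $\cV_2$ stops being well defined, let alone injective. Concretely, take the path $a$--$b$--$c$--$d$--$e$--$f$ ($\ell=6$). Round 1 picks $u=a$ and adds $b$ (charge $b\mapsto a$); round 2 picks $u=d$ and, legitimately by your rules, chooses the neighbour $w=c\in D$, adding $c$ (charge $c\mapsto d$); round 3 picks $u=e$ and chooses $w=d\in D$, promoting the charge-target $d$ (charge $d\mapsto e$); round 4 picks $u=f$ and adds $e$. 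You end with $\cV_1=\{b,c,d,e\}$, of size $4>\ell/2=3$, and $\cV_2=\{a,f\}$, so $\sharp\cV_1\le\sharp\cV_2$ fails badly. Repairing this is not mere bookkeeping: forcing a preference for $w\in U$ still leaves the case where all neighbours of $u$ lie in $D$, which reintroduces the same promotion problem, and your parenthetical maximal-matching alternative has exactly the unresolved endpoint-selection issue you yourself point out.

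The paper's proof takes a different and much shorter route: replace $G$ by a spanning forest with the same connected components (still no isolated vertices); a forest is bipartite, and in each component one takes for $\cV_1$ the smaller side of the bipartition, which has at most half the vertices and automatically dominates the other side since every edge of the tree crosses the bipartition. Any correct version of your greedy idea effectively has to smuggle in this structure, e.g.\ by processing a spanning tree from the leaves inward; the unrestricted greedy on the whole graph, as you have set it up, does not work.
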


\begin{proof} The statement must be well-known, but we give a simple proof. If $\widetilde G$ is a graph (without isolated vertices) obtained from $G$ by omitting some edges, and the statement is valid for $\widetilde G$, then the statement is obviously valid for $G$. Let $\widetilde G$ be a forest graph  (that is, a graph without cycles) obtained from $G$ by omitting some edges, such that the number of connected
components of $G$ and $\widetilde G$ are the same. Then $\widetilde G$ is a bipartite graph, so the statement is clearly valid for it. Hence the result follows.
\end{proof}

\section{Proofs}

\subsection{Proof of Theorem~\ref{thm:Ms-star}}
Suppose that for some $n_1,\ldots,n_s \in [M+1,M+N]$  the terms $u(n_1),\ldots,u(n_s)$ are \md \ of maximal rank, that is, we have~\eqref{eq:MultRel}
with some nonzero integers $k_1,\ldots,k_s$.

Choose a positive real number $R\ge 2$ to be specified later, and let $\cW(R)$ be
as in  Lemma~\ref{lem:MultOrd}.

Write $t$ for the number of indices $i =1, \ldots, s$   for which $u(n_i)$ has a prime divisor $p_i\notin \cW(R)$,
and let $r = s-t$ for the number of indices $i$ with $u(n_i)$ having all prime divisors in $\cW(R)$.
Without loss of generality, we may assume that the corresponding integers are  $n_1,\ldots,n_t$, and $n_{t+1},\ldots,n_s$, respectively.

By Lemmas~\ref{lem:S-unit_LRS} and~\ref{lem:MultOrd}, for $M\ge 1$, the number $K_1$ of such $r$-tuples $\(n_{t+1},\ldots,n_s\) \in [M+1,M+N]^r$ satisfies
\begin{equation}
\label{eq:M1}
K_1\ll \(\frac{R^2N\log (N+M)}{M\log R}\)^r.
\end{equation}
If $M=0$, then we have the bound
\begin{equation}
\label{eq:M1 M=0}
K_1\ll \(\frac{R^2(\log N)^2}{\log R}\)^r.
\end{equation}

We assume that such an $r$-tuple  $\(n_{t+1},\ldots,n_s\)$ is fixed.

Consider the $t$-tuples $\(n_1,\ldots,n_t\)\in [M+1,M+N]^t$. Recall that for any $1\leq i\leq t$, there is a prime $p_i\notin \cW(R)$ such that $p_i\mid u(n_i)$.

Define the graph $\cG$ whose vertices are $u(n_1),\ldots,u(n_t)$, and connect the vertices $u(n_i)$ and $u(n_j)$ precisely when $\gcd(u(n_i),u(n_j))$ has a prime divisor outside $\cW(R)$. Observe that  as $u(n_1),\ldots,u(n_s)$ are \md\ of
maximal rank, $\cG$ has no isolated vertex. Thus, by Lemma~\ref{lem:GraphCover}, there exists a
subset $\cI$ of $\{1,\ldots, t\}$ with
\begin{equation}
\label{eq:m t/2}
m=\sharp \cI\leq \fl{t/2}
\end{equation}
such that for any $j$ with
\[
j\in \{n_1,\ldots,n_t\}\setminus \cI
\]
the vertex $u(n_j)$ is connected with some $u(n_i)$ in $\cG$, for some $i\in \cI$.

Without loss of generality we may assume that $\cI=\{1,\ldots, m\}$. Trivially, the number $K_2$ of such $m$-tuples $(n_1,\ldots,n_m)\in[M+1,M+N]^m$ satisfies
\begin{equation}
\label{eq:M2}
K_2\ll N^m.
\end{equation}

We now fix  such an  $m$-tuple. For $\ell = t-m$, we now count  the number $K_3$ of
the remaining $\ell $-tuples $(n_{m+1},\ldots,n_t) \in [M+1,M+N]^\ell$.
Since each $u(n_j)$ with $m+1\leq j\leq t$ has a common prime factor $p\notin \cW(R)$ with $u(n_i)$ for some $1\leq i\leq m$, by Lemma~\ref{lem::Cong_LRS} we obtain that $n_j$ comes from a set $\cN$ of cardinality
\[
\sharp \cN \ll N\left(N^{-1}+\varrho_p^{-1}\right)^{1/(d-1)}\leq N\left(N^{-1}+R^{-1}\right)^{1/(d-1)}.
\]
Thus we obtain
\begin{equation}
\label{eq:M3}
K_3 \le \(\sharp \cN\)^\ell \ll \left(N\left(N^{-1}+R^{-1}\right)^{1/(d-1)}\right)^{t-m}.
\end{equation}

We consider now two cases based on $M\le N\log N$ or $M> N\log N$.

If $M\le N\log N$, then
\[
\sM_s^*(M,N)\le \sM_s^*(0,2N\log N),
\]
therefore we reduce to counting $s$-tuples in the interval $[0,2N\log N]^s$.

Putting together the bounds~\eqref{eq:M1 M=0},~\eqref{eq:M2} and~\eqref{eq:M3} (with $N$ replaced by $2N\log N$),
for some non-negative integer $t \le s$ and $r = s-t$, we obtain
\begin{equation}
\label{eq:MsN-penult}
\begin{split}
\sM_s^*(M,N)  &\le K_1K_2K_3\\
&\ll \(R^2(\log N+\log\log N)^2/\log R\)^r\(N \log N\)^m \\
& \qquad\qquad \qquad \left(N\log N\left(N^{-1}+R^{-1}\right)^{1/(d-1)}\right)^{t-m}\\
& \le  N^{t+o(1)} R^{2r} \left(\left(N^{-1}+R^{-1}\right)^{1/(d-1)}\right)^{t/2},
\end{split}
\end{equation}
where the last inequality comes from~\eqref{eq:m t/2}.

Letting $R=N^\eta$ with some $0<\eta<1/2$,   we obtain
\[
\sM_s^*(M,N)\ll N^{t + 2\eta  r  - \eta t/(2(d-1)) + o(1)}
=  N^{2\eta  s + (1-2 \eta) t   - \eta t/(2(d-1)) + o(1)} .
\]
Writing $t=\ccg{z}s$ (and noting that $0\leq z\leq 1$), the exponent of the last term above (omitting the expression $o(1)$) is given by
\[
f_\eta(z) = \frac{s}{2(d-1)}((2d-4d\eta+3\eta-2)z+4\eta(d-1)).
\]
So taking
\[
\eta = \frac{2(d-1)}{4d-3},
\]
(to make $f_\eta(z)$ a constant), we obtain
\[
\sM_s^*(M,N)\ll  N^{2\eta  s +  o(1)}  = N^{s - s/(4d-3) + o(1)},
\]
which concludes this case.

If $M>N\log N$, then the bound~\eqref{eq:M1} becomes
\[
K_1\ll (R^2/\log R)^r.
\]
Putting this together with~\eqref{eq:M2} and~\eqref{eq:M3}, we obtain~\eqref{eq:MsN-penult} without the $(\log N)^2$ factor, that is,
\begin{align*}
\sM_s^*(M,N)& \le K_1K_2K_3\\
& \ll (R^2/\log R)^rN^m \left(N\left(N^{-1}+R^{-1}\right)^{1/(d-1)}\right)^{t-m}\\
& \ll N^{t} R^{2r} \left(\left(N^{-1}+R^{-1}\right)^{1/(d-1)}\right)^{t/2}.
\end{align*}
Using the same discussion and choice of $\eta$ as above, we conclude the proof.
\qed

\begin{rem}  Clearly in~\eqref{eq:MsN-penult}  we can replace $t/2$ with $\rf{t/2}$
but this does not change the optimal choice of $\eta$ and thus the final bound.
\end{rem}

\subsection{Proof of Theorem~\ref{thm:Ms-star M large}}
Let $(n_1,\ldots,n_s)\in[M+1,M+N]^s$ such that $u(n_1),\ldots,u(n_s)$ is \md\ of maximal rank, which implies that there exist integers $k_i\ne 0$, $i=1,\ldots,s$, such that~\eqref{eq:MultRel} holds. We can rewrite this relation as
\begin{equation}
\label{eq:md pos exp}
\prod_{i \in \cI}u(n_i)^{k_i} = \prod_{j\in \cJ} u(n_j)^{k_j},\quad k_i,k_j>0,
\end{equation}
where $\cI\cup \cJ=\{1,\ldots,s\}$, $\cI\neq \emptyset$, $\cJ\neq \emptyset$, $\cI\cap \cJ=\emptyset$. Let $I=\sharp\cI$ and $J=\sharp\cJ$, and thus, $I+J=s$.

Fix one of $2^s-2$ possible  choices of  the sets $\cI$ and $\cJ$ as above.
Fix $n_i$, $i\in \cI$, trivially in $O(N^I)$ ways.  Then, the square-free part $\rad(u(n_i))$ of $u(n_i)$
is fixed for each $i\in\cI$.

We may also assume that $n_i\ge C_2$, $i\in\cI$, with $C_2$ as in Lemma~\ref{lem:sqrfree}, since this condition is violated only
for $O(N^{s-1})$
choices of $(n_1, \ldots, n_s)$, which is admissible. 
By Lemma~\ref{lem:sqrfree}, for $n_i\in[M+1,M+N]$, one has
\begin{equation}
\label{eq:rad un}
\rad(u(n_i))>n_i^{c(\log_2 n_i)/\log_3 n_i}\gg M^{c(\log_2 M)/\log_3(M+N)}.
\end{equation}

For $i\in \cI$, from~\eqref{eq:md pos exp} we see
\[
\rad(u(n_i)) \mid \prod_{j\in \cJ} u(n_j).
\]
This implies that there is a  
factorisation $\rad(u(n_i))=d_1\cdots d_J$ such that for each positive integer $d_\ell$ there exists $j\in\cJ$ such that $d_\ell\mid u(n_{j})$.
Let $\ell$, $1 \le \ell \le J$, be such that $d_\ell\ge \rad(u(n_i))^{1/J}$, and
\begin{equation}
\label{eq:cong pk}
u(n_j)\equiv 0 \pmod{d_\ell}.
\end{equation}
From~\eqref{eq:rad un} we have
\begin{equation}
\label{eq:rad un J}
d_\ell>M^{c_0(\log_2 M)/\log_3(M+N)}
\end{equation}
with $c_0 = c/J \ge c/s$.

Using now Lemma~\ref{lem:cong}, the inequality~\eqref{eq:rad un J} and the fact that $J\le s$, the number of $n_j\in[M+1,M+N]$ satisfying the congruence~\eqref{eq:cong pk} is \[
O\(N/\log d_{\ell} +1\)=O\( N\frac{\log_3(M+N)}{\log M \log_2 M}+1\).
\]
Therefore, using the trivial bound $N^{J-1}$ for the number  of the remaining choices of $n_j$ with $j\in \cJ$, we obtain that the total number of $n_j\in[M+1,M+N]$, $j\in \cJ$, is
\[
O\(N^J\frac{\log_3(M+N)}{\log M \log_2 M}+N^{J-1}\).
\]
Thus we obtain that
\[
\sM_s^*(M,N)\ll N^s \frac{\log_3(M+N)}{\log M \log_2 M}+ N^{s-1}.
\]
Choosing $M\ge \exp(N \log_3N/\log_2N)$, we conclude the proof.
\qed

\subsection{Proof of Theorem~\ref{thm:M2-star}}
Clearly for  $s=2$ we have to count integers $M+1\le m, n \le M+N$,  with
\begin{equation}
\label{eq:uma=nub}
u(m)^a = u(n)^b
\end{equation}
for some positive integers $a$ and $b$, where without
loss of generality we can assume that  $\gcd(a,b) = 1$. We also notice that since the relation~\eqref{eq:uma=nub} is of maximal rank,
neither $u(m)=\pm 1$ nor $u(n)=\pm 1$ holds.

Since $\vu$ has a dominant root,  $|u(n)|$ grows monotonically with $n$,
provided that $n$ is large enough. Hence there are $N+O(1)$ solutions $(m,n)\in [M+1,M+N]^2$ with
$a=b=1$.

Now we count pairs $(m,n)$ for which~\eqref{eq:uma=nub}
holds with some $(a,b) \ne (1,1)$.

We observe that if $a>1$ then $u(n)$ is the $a$-th power
and by Lemma~\ref{lem:LRS-Pow} there are $O(1)$ such values of $n$.
For $b > 1$ the argument also applies to $m$. Hence the total contribution
from such solutions,  over all $a, b > 1$, is $O(1)$.

If $a> 1$ and $b = 1$ then again  we see that there are $O(1)$
such values of $n$.  From this we easily derive that $a= O(1)$,
and hence we obtain $O(1)$ possible values for $m$.
So the contribution of such solutions to~\eqref{eq:uma=nub}  is also O(1) only.

The case of $a=1$ and $b >1$ is completely analogous, which concludes the proof.
\qed

\section{Possible applications of our approach}

Our approach works for many other integer sequences $\(a(n)\)_{n=1}^\infty$, 
provided the following information is available:

\begin{itemize}
\item[(i)]  there are good bounds on the number of solutions to congruences $a(n) \equiv 0 \pmod q$, $1 \le n \le N$, 
in a broad range of positive integers $q$ (or even just prime $q=p$) and $N$;

\item[(ii)] there are good bounds (or known finiteness) on the number of perfect powers among $a(n)$, 
$1 \le n \le N$.

\end{itemize}

For example, using results of~\cite{ShZa}, coupled with the finiteness result of Lemma~\ref{lem:LRS-Pow}, 
one can estimate the number of multiplicatively dependent $s$-tuples from values of linear recurrence sequences 
at polynomial values of the argument $\(u\(F(n)\)\)_{n=1}^\infty$, where $F\in \Z[X]$.


\section*{Acknowledgement}

During this work,  A.B. and L.H. were  supported, in part, by the
NKFIH grants 130909 and 150284 and A.O.  and I.S.  by the Australian Research Council Grant  DP230100530.  A.O. gratefully acknowledges the hospitality and support of
the University of Debrecen, where this work was initiated,  and the Max Planck Institute for Mathematics and Institut des Hautes Études Scientifiques, where  this
work has been carried out.

\end{document}